\newtheorem{theorem}{Theorem}
\newtheorem{definition}{Definition}
\newtheorem{lemma}{Lemma}
\newtheorem{remark}{Remark}
\newtheorem{proposition}{Proposition}
\newtheorem{assumption}{Assumption}
\newtheorem*{theorem*}{Theorem}
\newtheorem*{example*}{Example} 
\newtheorem*{definition*}{Definition}
\newtheorem*{lemma*}{Lemma}
\newtheorem*{remark*}{Remark}
\newtheorem*{corollary*}{Corollary}
\newtheorem*{proposition*}{Proposition}
\newtheorem*{assumption*}{Assumption}
\newtheorem*{claim*}{Claim}
\newtheoremstyle{TheoremNum}
        {\topsep}{\topsep}              
        {\itshape}                      
        {}                              
        {\bfseries}                     
        {.}                             
        { }                             
        {\thmname{#1}\thmnote{ \bfseries #3}}
\theoremstyle{TheoremNum}
\newtheoremstyle{LemmaNum}
        {\topsep}{\topsep}              
        {\itshape}                      
        {}                              
        {\bfseries}                     
        {.}                             
        { }                             
        {\thmname{#1}\thmnote{ \bfseries #3}}
\theoremstyle{LemmaNum}
\newcommand{\newparallel}{\mathrel{\mathpalette\new@parallel\relax}}
\newcommand{\new@parallel}[2]{%
  \begingroup
  \sbox\z@{$#1T$}
  \resizebox{!}{\ht\z@}{\raisebox{\depth}{$\m@th#1/\mkern-5mu/$}}%
  \endgroup
}
\renewcommand{\Pr}{ \mathbb{P} }
\newcommand{\R}{\mathbb{R}}
\newcommand{\E}{\mathbb{E}}
\renewcommand{\[}{\left[ }
\renewcommand{\]}{\right] }
\newcommand{\<}{\left< }
\renewcommand{\>}{\right> }
\renewcommand{\(}{\left( }
\renewcommand{\)}{\right) }
\def\highlight{0}
\begin{document} 

\title{Revisiting Stochastic Gradient Descent for Strongly Convex Objectives: Tight Uniform-in-Time Bounds}  

\author{Kang Chen\footnote{The authors are listed in alphabetical order.} \quad Yasong Feng${}^*$ \quad Tianyu Wang${}^*$ } 

\date{} 

\maketitle 

\begin{abstract} 
Stochastic optimization via Stochastic Gradient Descent (SGD) is a fundamental problem in statistics and optimization. 
This paper revisits Stochastic Gradient Descent (SGD) for strongly convex objectives, establishing tight, uniform-in-time convergence bounds. We prove that, with probability at least $1 - \beta$, a convergence rate of order $\frac{\log \log k + \log (1/\beta)}{k}$ simultaneously holds for all $ k \in \mathbb{N}_+ $, and demonstrate this bound is tight up to constant factors. We also provide an improved last-iterate convergence rate for such objectives. 
While focused on strongly convex objectives, our results generalize to the Polyak--\L ojasiewicz functions and indicate an $\mathcal{O}(k^{-1} \log \log k)$ convergence rate for contractive stochastic approximation with additive noise.
\end{abstract} 
 
\section{Introduction}

Stochastic optimization for smooth, strongly convex objectives is a foundational yet enduring challenge in statistics and optimization, with roots tracing back to the pioneering work of \citet{10.1214/aoms/1177729586} and \citet{10.1214/aoms/1177729392}. The core problem is the minimization of a function $f$,
\begin{align*}
\min_{x \in \R^d} f (x),
\end{align*}
where the optimization algorithm must proceed using only noisy estimates of the gradient of $f$. 
Over the decades, this problem has been extensively studied \citep[e.g.,][ and references therein]{lan2020first,liu2024revisiting,2506.23335,doi:10.1287/stsy.2022.0097, 10.1214/24-AAP2143}. 

\subsection{Uniform-in-Time Bounds}

Uniform-in-time bounds are a cornerstone of sequential statistical analysis. These bounds provide simultaneous guarantees that hold with high probability for all time steps $k\in\mathbb{N}$. Their \emph{uniform} validity across the whole time horizon makes them indispensable for sequential decision-making applications such as adaptive sampling, real-time inference, and stopping-time analyses.

In the context of parameter estimation with i.i.d. samples, such time-uniform high probability bounds are often referred to as confidence sequences \citep{darling1967confidence, lai1976confidence}. Due to their time-uniform nature, confidence sequences have become widely utilized in sequential decision-making tasks \citep{jennison1989interim, lai1984incorporating, jamieson2014lil, kaufmann2016complexity, howard2021time, johari2022always, waudby2024estimating}. 
{Unlike the typical $\Theta(k^{-1/2})$ shrinkage rate of confidence intervals, confidence sequences exhibit a slower rate of $\mathcal{O}(k^{-1/2} \sqrt{\log\log k})$.}
A recent work of \citet{duchi2024information} has further established that the $\sqrt{ \log\log k}$ term is not merely an artifact but a necessary component of such bounds. 

Uniform-in-time guarantees are particularly important in stochastic optimization, since such guarantees implies convergence in stopping times.\footnote{Convergence in stopping time and uniform-in-time convergence are equivalent; See Lemma 3 in \cite{2506.23335}.} In practice, stochastic optimization algorithms are usually terminated based on dynamic, data-driven criteria. A common example is early stopping in large-scale machine learning, where training is halted once validation performance plateaus or begins to degrade—a widely adopted heuristic to prevent overfitting \citep[e.g.,][]{prechelt2002early,dodge2020fine}. From a mathematical standpoint, such termination rules define stopping times. To illustrate, consider a simple stopping criterion for an iterative stochastic algorithm:  
\begin{align}  
    \tau := \min \{ k \in \mathbb{N} : \| x_k - x_{k-1} \| \le \varepsilon \}, \label{eq:toy}  
\end{align}  
where $ x_k $ denotes the iterate at step $ k $ and $ \varepsilon $ is a predefined tolerance threshold. Importantly, $ \tau $ is inherently random. 

{\if\highlight1\color{red}\fi
The problem of obtaining uniform-in-time concentration bounds has also been explored in the context of contractive stochastic approximation algorithms \citep{doi:10.1287/stsy.2022.0097, 10.1214/24-AAP2143}. In particular, \citet{10.1214/24-AAP2143} obtained an $\mathcal{O} (k^{-1}\log k)$ uniform-in-time bound for contractive stochastic approximation with additive noise. In this work, 
we provide a refined analysis of stochastic gradient descent under strongly convex settings that yields an $\mathcal{O}(k^{-1} \log \log k)$ uniform-in-time convergence rate -- an improvement over the previously known $\mathcal{O}(k^{-1} \log k)$ rate -- and show that this rate cannot be improved in general. 
}

\subsection{Our Results}  

In this work, we establish a uniform-in-time convergence bound for the standard stochastic gradient descent (SGD) on strongly convex functions: 
the standard SGD algorithm attains, 
with probability greater than $1 - \beta$, a \textit{uniform} convergence rate of $\frac{\log \log k + \log(1/\beta)}{k}$ across all iterations $k \in \mathbb{N}_+$. Specifically, we prove the following upper bound: 

\begin{itemize}
    \item For any {\if\highlight1\color{red}\fi$\mu$-strongly convex} and $L$-smooth objective $f$, the standard stochastic gradient descent (SGD) algorithm with standard step size $ \eta_k = \Theta \( \frac{1}{k} \) $ for all $k \in \mathbb{N}_+$ satisfies: There exists constants $C_1$ and $C_2$ (See Theorem \ref{thm:upper} for details on $C_1$ and $C_2$), such that for any $\beta \in (0,1)$, 
    \begin{align*} 
        &\; \Pr \( f (x_k) - f^* \le C_1 \cdot \frac{\log \log k + \log (1/\beta) + C_2 }{k} , \; \forall k \in \mathbb{N}_+ \) \\
        \ge&\;  1 - \beta , 
    \end{align*} 
    where $f^*$ is the minimum of $f$, and $x_k$ is governed by the standard SGD algorithm. 
\end{itemize}

{\if\highlight1\color{red}\fi To achieve this rate, we construct a super-martingale for the conditional moment generating function and apply Ville's maximal inequality sequentially over geometrically expanding intervals of the form $[2^k,2^{k+1})$. A final union bound across this decomposition yields the uniform-in-time guarantee of order $\mathcal{O}(k^{-1} \log \log k)$. This technique improves upon prior results of order $\mathcal{O}(k^{-1}\log k)$, which rely on either a direct application of Ville's inequality or a uniform bound over all non-negative integers $\mathbb{N}$ with no partitioning. } 





In addition, we prove that the above upper bound is tight. More precisely, via an argument that reduces uniform-in-time optimization to uniform-in-time testing, we derive the following lower bound for \emph{uniform-in-time convergence rate}:
\begin{itemize}
    \item 
    Let $\beta \in (0,1) $. If $\varepsilon (k)$ is an error sequence such that for any $\mu$-strongly convex $f$, $x_k$ governed by a stochastic first-order method, it holds that 
    \begin{align*} 
        \Pr \( f (x_k) - f^* \le \varepsilon(k) , \; \forall k \in \mathbb{N}_+ \) \ge 1 - \beta, 
    \end{align*}
    then for infinitely many $k\in\mathbb{N}_+$, we have 
    \begin{align*} 
        \varepsilon(k)\geq\frac{\sigma^2(1-\beta)}{48\mu}\cdot\frac{\log\log k}{k}, 
    \end{align*} 
    where $\sigma^2$ is the noise variance. See Theorem \ref{thm:lower} for details. 
\end{itemize}

{\if\highlight1\color{red}\fi
\begin{remark}
    Although our primary focus is strongly convex objectives, our results naturally extend to the Polyak-Łojasiewicz (PL) functions. Moreover, our analysis implies an $\mathcal{O}(k^{-1} \log \log k)$ convergence rate for contractive stochastic approximation with additive noise, as discussed in \ref{appendix:a}. 
\end{remark}
}

Also, as an intermediate result, we succinctly prove an improved last-iterate convergence rate of stochastic gradient descent on strongly convex objectives. Compared to the state-of-the-art results by \citet{liu2024revisiting}, our last-iterate bound removes the logarithmic factors in $k$. A summary of comparison with state-of-the-art results for SGD analysis on strongly convex objectives is presented in Table \ref{tab:compare}. 

\begin{table*}[h!]
    \centering
    \begin{threeparttable}
        \caption{High probability convergence rates for standard stochastic gradient descent on strongly convex objectives. The second column contains the error bounds with probability at least $1-\beta$. 
        } 
        \label{tab:compare}
        \begin{tabular}{|c|c|c|c|}        
            \hline 
             & \makecell{High-probability \\ convergence order, \\ with probability $\ge 1-\beta$} & Type & Domain \\
            \hline 
           \makecell{Theorem 3.5 in \cite{liu2024revisiting},\\
           the case for unknown\\total iterate count.\\Existing state-of-the-art.}
            & $\mathcal{O}\left(\frac{  \log k \cdot { \log (1/\beta) } }{k}\right)$ 
            & {\makecell{Non-random\\last iterate $k$.~\tnote{1}}} 
            & $\mathbb{R}^d$ 
            \\ \hline
            
            \textbf{\makecell{This work\\(Lemma \ref{lemma5})}} 
            & $ \mathcal{O}\left(\frac{\log (1/\beta)}{k}\right) $ 
            & \makecell{Non-random\\last iterate $k$.}  
            & $\mathbb{R}^d$ 
            \\ \hline
            
            \textbf{\makecell{This work\\(Theorem \ref{thm:upper})}} 
            & $ \mathcal{O}\left(\frac{ \max\{ 1, \log \log k \} + \log (1/\beta)}{k}\right) $
            & \makecell{Uniform-in-time.\\Simultaneously holds\\for all $k \in \mathbb{N}_+$.} 
            & $\mathbb{R}^d$ \\ \hline
            
            \textbf{\makecell{This work\\(Theorem \ref{thm:lower})}} 
            & $ \Omega\left(\frac{ (1-\beta)\log\log k}{k}\right) $
            & \makecell{Infinitely often.\\Holds for infinitely\\many $k \in \mathbb{N}_+$.} 
            & $\mathbb{R}^d$ 
            
            \\ \hline
        \end{tabular}
        \begin{tablenotes}
            \item[1] In this context, `Non-random last iterate' refers to the setting where the algorithm's total iteration count is unknown before the algorithm starts, and the algorithm's output is simply the last iterate. 
            See Lemma \ref{lemma5} for details. 
        \end{tablenotes}
    \end{threeparttable}
\end{table*}

    

\section{The Uniform-in-Time Upper Bound}

Starting with $x_0 \in \mathbb{R}^d$, the standard Stochastic Gradient Descent (SGD) algorithm is governed by: 
\begin{align}
    x_{k+1} = x_k - \eta_k g(x_k; \xi_k), \quad \forall k \in \mathbb{N}, 
    \label{eq:SGD}
\end{align}
where $\eta_k \asymp \frac{1}{k} $ is the stepsize, and $g(x_k; \xi_k)$ is the stochastic gradient at $x_k$. 

To describe the stochastic process, we write 
\begin{align*}
    \mathcal{F}_k = \sigma (\xi_0, \xi_1, \cdots, \xi_{k - 1}), \quad k = 1, 2, \cdots, 
\end{align*}
for the $\sigma$-algebra generated by all randomness right before obtaining the stochastic gradient $g(x_k; \xi_k)$. We state some conventions and assumptions that will be used throughout the analysis. 

{\if\highlight1\color{red}\fi
\begin{assumption} \label{assump:f}
    The objective function $f$ is $\mu$-strongly ($\mu > 0$) convex and $L$-smooth ($0 < L < \infty$), 
    where the $L$-smoothness of $f$ means that $f$ is differentiable and for any $x, y \in \R^d$, 
    \begin{align*}
        \| \nabla f(x) - \nabla f(y) \| \leq L\|x-y\|. 
    \end{align*}
\end{assumption}
} 
{\if\highlight1\color{red}\fi
\begin{remark}
    Assumption \ref{assump:f} implies that $f$ attains its minimum $f^*>-\infty$ at some $x^*\in\R^d$ and  satisfies $\mu$-PL condition, which means that for any $x \in \R^d$, 
        $f(x) - f^* \le \frac{1}{2\mu}\|\nabla f(x)\|^2. $
\end{remark}
}


\begin{assumption} \label{assump:subGaussian}
    For all $k \in \mathbb{N}$, the stochastic gradient satisfies: 
    \begin{itemize}
        \item (Conditional unbiasedness) $\E \left[ g(x_k; \xi_k) \middle| \mathcal{F}_k \right] = \nabla f(x_k)$; 
        \item ({\if\highlight1\color{red}\fi Uniform conditional 1-sub-Gaussianity}, Theorem 2.6 in \cite{wainwright2019high})
        \begin{align*}
            \E \left[ \exp \( t \| g(x_k; \xi_k) - \nabla f(x_k) \|^2 \) \middle| \mathcal{F}_k \right] \le \frac{1}{\sqrt{1 - 2t}}, \quad \forall t \in \( 0, \frac12 \){\if\highlight1\color{red}\fi, \forall k \in \mathbb{N}}. 
        \end{align*}
    \end{itemize}
\end{assumption}

\begin{proposition}
    By Theorem 2.6 in \cite{wainwright2019high}, up to constants, the Conditional 1-sub-Gaussian condition in Assumption \ref{assump:subGaussian} implies 
    \begin{align*}
        \E \left[ \exp ( \langle \varphi, g(x_k; \xi_k) - \nabla f(x_k) \rangle ) \middle| \mathcal{F}_k \right] \le \exp \( \frac12 \| \varphi \|^2 \) , 
    \end{align*}
    for any $\varphi $ that is $\mathcal{F}_k$-measurable. 
\end{proposition}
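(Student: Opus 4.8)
The plan is to fix the index $k$ and argue conditionally on $\mathcal{F}_k$, writing $X := g(x_k;\xi_k) - \nabla f(x_k)$, so that conditional unbiasedness gives $\E[X \mid \mathcal{F}_k] = 0$. The two statements to reconcile are the square-exponential norm bound of Assumption~\ref{assump:subGaussian} (call it condition~A) and the directional bound $\E[\exp(\langle\varphi, X\rangle)\mid\mathcal{F}_k]\le\exp(\tfrac12\|\varphi\|^2)$ of the proposition (call it condition~B). I would treat the two implications separately, isolating the direction that the subsequent analysis actually invokes, namely A $\Rightarrow$ B.

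For A $\Rightarrow$ B, which is the substantive and dimension-free direction, I read condition~A as the statement that the nonnegative scalar $\|X\|$ has a sub-Gaussian square-exponential moment generating function. By the chain of scalar equivalences in Theorem 2.6 of \cite{wainwright2019high}, this is equivalent to the sub-Gaussian moment growth $\E[\|X\|^{2p}\mid\mathcal{F}_k]\le (Cp)^{p}$ for a universal constant $C$ and every $p\ge1$ (consistent with the $(2p-1)!!$ profile implied by the $(1-2t)^{-1/2}$ bound). For any $\mathcal{F}_k$-measurable $\varphi$, Cauchy--Schwarz gives $|\langle\varphi,X\rangle|\le\|\varphi\|\,\|X\|$ pointwise, so the scalar $Y:=\langle\varphi,X\rangle$ inherits $\E[|Y|^{2p}\mid\mathcal{F}_k]\le\|\varphi\|^{2p}(Cp)^{p}$, i.e.\ the same sub-Gaussian moment growth scaled by $\|\varphi\|$. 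Since $\E[Y\mid\mathcal{F}_k]=0$, the converse direction of Theorem 2.6 (mean-zero together with sub-Gaussian moment growth yields a sub-Gaussian MGF) produces $\E[\exp(Y)\mid\mathcal{F}_k]\le\exp(c\|\varphi\|^2)$ for a universal $c$, which is exactly condition~B up to the constant.

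For completeness I would supply the reverse implication B $\Rightarrow$ A via Gaussian marginalization. Introducing $Z\sim N(0,I_d)$ independent of everything, one has the pointwise identity $\exp(t\|X\|^2)=\E_Z[\exp(\sqrt{2t}\,\langle Z,X\rangle)]$ for $t\in(0,\tfrac12)$. Taking the conditional expectation, exchanging the order of integration (Tonelli, since the integrand is nonnegative), and applying condition~B with the deterministic vector $\sqrt{2t}\,z$ for each fixed $z$ gives $\E[\exp(t\|X\|^2)\mid\mathcal{F}_k]\le\E_Z[\exp(t\|Z\|^2)]=(1-2t)^{-d/2}$, which recovers the shape of condition~A up to replacing the sub-Gaussian parameter by a dimensional factor.

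The step I expect to require the most care, and the reason the claim is phrased ``up to constant,'' is the asymmetry between the two conditions. Going from A to B one must not discard the conditional mean-zero property: the bound $|\langle\varphi,X\rangle|\le\|\varphi\|\,\|X\|$ controls magnitude but not sign, so without re-centering the moment-to-MGF passage would leave a spurious linear term of order $\|\varphi\|\,\E[\|X\|\mid\mathcal{F}_k]$; it is conditional unbiasedness that removes this term and upgrades the directional bound from merely sub-exponential to genuinely sub-Gaussian. I therefore anticipate the only delicate bookkeeping to be verifying that the emerging constant $c$ is universal and, for the A $\Rightarrow$ B direction, independent of the dimension $d$, while the reverse direction transparently exhibits the dimensional factor $(1-2t)^{-d/2}$, so that the equivalence holds up to constants in each fixed dimension.
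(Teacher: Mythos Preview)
The paper does not supply a proof of this proposition beyond the citation to Theorem~2.6 in \cite{wainwright2019high}; it is stated as an immediate consequence of the standard scalar sub-Gaussian equivalences. Your proposal is a correct and careful unpacking of exactly that citation, including the observation that the reverse implication B $\Rightarrow$ A via Gaussian marginalization incurs the dimensional factor $(1-2t)^{-d/2}$, which is the sense in which the equivalence is only ``up to constant'' in each fixed dimension.
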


\begin{remark}
    Without loss of generality, we assume the stochastic gradient is 1-sub-Gaussian. 
    The analysis extends straightforwardly to problems with different sub-Gaussian parameters. 
\end{remark}

For simplicity, we define 
\begin{align*}
    \Delta_k = f(x_k) - f^*, 
\end{align*}
where $\{ x_k \}_k$ is the sequence generated by SGD \eqref{eq:SGD}. 

Having established the above conventions, we now turn to the uniform-in-time analysis of the SGD algorithm {\if\highlight1\color{red}\fi\eqref{eq:SGD}}.
We start the proof with the following lemma, which establish a recursion of the conditional moment generating function of $\Delta_k$. 

\begin{lemma} \label{lemma3}
    Let Assumptions \ref{assump:f} and \ref{assump:subGaussian} hold. 
    If the stepsize sequence $\{ \eta_k \}_k$ and the nonnegative constant $t$ {\if\highlight1\color{red}\fi satisfy $\eta_k \in \left(0, \frac1L\right]$ and $t  \eta_k \in \left(0, \frac14\right]$ for all $k \in \mathbb{N}$, then the following holds, }
    \begin{align*}
        \E \left[ \exp \( t \Delta_{k+1} \) \middle| \mathcal{F}_k \right] \le \exp \( \( 1 - \frac{\mu \eta_k}{2} \) t \Delta_k + t L \eta_k^2 \), \quad \forall k \in \mathbb{N}. 
    \end{align*}
\end{lemma}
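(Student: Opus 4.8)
The plan is to combine the standard descent inequality with a moment-generating-function (MGF) argument, where the two sub-Gaussian controls of Assumption \ref{assump:subGaussian} are used to tame the \emph{linear} and \emph{quadratic} parts of the gradient noise \emph{separately}. First I would apply $L$-smoothness to the update $x_{k+1} = x_k - \eta_k g(x_k;\xi_k)$ and write $g(x_k;\xi_k) = \nabla f(x_k) + e_k$ with $e_k := g(x_k;\xi_k) - \nabla f(x_k)$. Expanding $\langle \nabla f, g\rangle$ and $\|g\|^2$ and subtracting $f^*$ yields the one-step bound
\begin{align*}
    \Delta_{k+1} \le \Delta_k - \eta_k\Big(1 - \tfrac{L\eta_k}{2}\Big)\|\nabla f(x_k)\|^2 - \eta_k(1 - L\eta_k)\langle \nabla f(x_k), e_k\rangle + \tfrac{L\eta_k^2}{2}\|e_k\|^2 .
\end{align*}
Because $e_k$ is conditionally mean-zero, the middle term disappears under $\E[\,\cdot\mid\mathcal{F}_k]$ in expectation, but here we must control the full exponential, not just the expectation.

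Second, I would pass to the conditional MGF: multiplying by $t$ and exponentiating, every $\mathcal{F}_k$-measurable quantity ($\Delta_k$, $\nabla f(x_k)$, $\eta_k$) factors out, leaving $\E[\exp(\text{linear in }e_k + \text{quadratic in }e_k)\mid\mathcal{F}_k]$. \textbf{This is where the main obstacle lies:} the linear term $-t\eta_k(1-L\eta_k)\langle\nabla f(x_k),e_k\rangle$ and the quadratic term $\tfrac{tL\eta_k^2}{2}\|e_k\|^2$ sit inside a \emph{single} exponential, so neither sub-Gaussian bound applies on its own. I would decouple them with the conditional Hölder inequality at the symmetric exponent $p=q=2$; the symmetric choice is what ultimately makes the target constants ($\tfrac{\mu\eta_k}{2}$ drift and $tL\eta_k^2$ offset) line up exactly.

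Third, I would bound each factor. The linear factor is handled by the equivalent sub-Gaussian MGF form stated in the Proposition above (with $\mathcal{F}_k$-measurable $\varphi \propto \nabla f(x_k)$), producing a term $\exp\big(t^2\eta_k^2(1-L\eta_k)^2\|\nabla f(x_k)\|^2\big)$; the quadratic factor is handled by the $\|e_k\|^2$ sub-Gaussian bound, giving $(1-2tL\eta_k^2)^{-1/4}$, which is valid precisely because $2tL\eta_k^2 = 2(t\eta_k)(L\eta_k) \le \tfrac12$ under the hypotheses $\eta_k\le\tfrac1L$ and $t\eta_k\le\tfrac14$. Collecting the surviving $\|\nabla f(x_k)\|^2$ terms, their net coefficient is $-t\eta_k(1-\tfrac{L\eta_k}{2}) + t^2\eta_k^2(1-L\eta_k)^2 \le -\tfrac{t\eta_k}{4}$ under the same step-size conditions; invoking the Polyak--Łojasiewicz inequality $\|\nabla f(x_k)\|^2 \ge 2\mu\Delta_k$ (a consequence of $\mu$-strong convexity in Assumption \ref{assump:f}) absorbs this into the desired $-\tfrac{\mu\eta_k}{2}\,t\Delta_k$ drift. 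Finally, the quadratic contribution $-\tfrac14\log(1-2tL\eta_k^2) \le tL\eta_k^2$ follows from the elementary estimate $-\log(1-x)\le 2x$ on $[0,\tfrac12]$, matching the additive term and completing the recursion. The only delicate points are verifying that the step-size constraints keep both sub-Gaussian bounds in range and that the $p=q=2$ constants close the inequality without slack in the leading order.
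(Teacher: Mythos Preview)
Your proposal is correct and follows essentially the same route as the paper: descent lemma, Cauchy--Schwarz (Hölder at $p=q=2$) to separate the linear and quadratic noise terms, the two sub-Gaussian bounds, the simplification of the $\|\nabla f(x_k)\|^2$ coefficient via $t\eta_k\le \tfrac14$, and the PL inequality from strong convexity. The only cosmetic difference is that the paper discards the factors $(1-\tfrac{L\eta_k}{2})$ and $(1-L\eta_k)^2$ earlier (replacing them by $\tfrac12$ and $1$ as soon as $\eta_k\le\tfrac1L$ is invoked), whereas you carry them through and bound them at the end; both lead to the same constants.
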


\begin{proof}
    By $L$-smoothness, we get 
\begin{align}
\begin{aligned}
    \Delta_{k+1} =&\; f(x_{k+1})-f^*  \\
    \le&\; f(x_k) + \langle \nabla f(x_k), x_{k+1} - x_k \rangle + \frac{L}{2} \| x_{k+1} - x_k \|^2 - f^* \\
    =&\; f(x_k) + \langle \nabla f(x_k), -\eta_k g(x_k; \xi_k) \rangle + \frac{L \eta_k^2}{2} \| g(x_k; \xi_k) \|^2 - f^* \\
    =&\; f(x_k) + \eta_k \langle \nabla f(x_k), \theta_k \rangle -\eta_k \| \nabla f(x_k) \|^2 + \frac{L \eta_k^2}{2} \| \theta_k \|^2  \\
    & - L \eta_k^2 \langle \theta_k, \nabla f(x_k) \rangle + \frac{L \eta_k^2}{2} \| \nabla f(x_k) \|^2 - f^* \\
    \le&\; \Delta_k + \eta_k \langle \nabla f(x_k), \theta_k \rangle - \frac{\eta_k}{2}\| \nabla f(x_k) \|^2  +\frac{L \eta_k^2}{2} \| \theta_k \|^2 - L \eta_k^2 \langle \theta_k, \nabla f(x_k) \rangle, 
\end{aligned}
\label{eq:recursion}
\end{align} 
where $\theta_k := \nabla f(x_k) - g(x_k; \xi_k)$, and the last inequality uses $\eta_k \in \(0, \frac{1}{L}\]$. 

Next consider the process $\{\exp (t\Delta_k)\}_k$. For this process we have
\begin{align}
\begin{aligned}
    & \; \E \left[ \exp \( t \Delta_{k+1} \) \middle| \mathcal{F}_k \right] \\
    \le & \; \exp \( t \Delta_k - \frac{t \eta_k}{2} \| \nabla f(x_k) \|^2\) \\
    & \; \cdot \underbrace{\E \left[ \exp \( t \( \( \eta_k - L \eta_k \) \langle \nabla f(x_k), \theta_k \rangle + \frac{L \eta_k^2}{2} \| \theta_k \|^2 \) \) \middle| \mathcal{F}_k \right] }_{=:\text{\textcircled{1}}}. 
\end{aligned}
\label{eq4}
\end{align}

By 1-sub-Gaussianity, we have
\begin{align*}
    \text{\textcircled{1}} 
    & \overset{\text{(i)}}{\le} \( \E \left[ \exp \( 2t \( \eta_k \langle \nabla f(x_k), \theta_k \rangle - L \eta_k^2 \langle \theta_k, \nabla f(x_k) \rangle \) \) \middle| \mathcal{F}_k \right] \cdot \E \left[ \exp \( L t \eta_k^2 \| \theta_k \|^2 \) \middle| \mathcal{F}_k \right] \)^\frac12 \\
    & \overset{\text{(ii)}}{\le} \( \exp \( \( 2t^2 \( \eta_k - L \eta_k^2 \)^2 \) \| \nabla f(x_k) \|^2 \) \cdot \frac{1}{\sqrt{1 - 2 L t \eta_k^2}} \) ^\frac12, 
\end{align*}
where (i) uses Cauchy-Schwarz inequality and (ii) uses sub-Gaussianity and {\if\highlight1\color{red}\fi the fact that $Lt\eta_k^2\in \(0, \frac14\] \subset \(0, \frac12\)$}. We continue the calculation for \textcircled{1} to obtain
\begin{align}
\begin{aligned}
    \text{\textcircled{1}} & \overset{\text{(i)}}{\le} \( \exp \( 2 t^2 \eta_k^2 \| \nabla f(x_k) \|^2 \) \cdot \frac{1}{\sqrt{1 - 2 L t \eta_k^2}} \)^\frac12 \\
    & \overset{\text{(ii)}}{\le} \exp \( t^2 \eta_k^2 \| \nabla f(x_k) \|^2 \) \cdot \exp \( L t \eta_k^2 \),
\end{aligned} 
\label{eq5}
\end{align} 
where (i) uses $ \left( \eta_k - L \eta_k^2 \right)^2 \le \eta_k^2 $ and (ii) uses $ L t \eta_k^2 \in \(0, \frac14\]$. 

From the definition of $t$ and $\eta_k$, we have 
\begin{align}
\begin{aligned}
    & \; t \Delta_k + \( t^2 \eta_k^2 - \frac{t \eta_k}{2} \) \| \nabla f(x_k) \|^2 \\
    \overset{\text{(i)}}{\le} & \; t \Delta_k - \frac{t \eta_k}{4} \| \nabla f(x_k)\|^2 \\
    \overset{\text{(ii)}}{\le} & \; t \Delta_k - \frac{t \mu \eta_k}{2} \Delta_k 
    =  t \( 1 - \frac{\mu \eta_k}{2} \) \Delta_k, 
\end{aligned} 
\label{eq6}
\end{align}
where (i) uses $t \eta_k \in \left(0, \frac14\right]$, and (ii) uses {\if\highlight1\color{red}\fi$\mu$-PL condition, which is implied by the $\mu$-strong convexity of $f$}. 

Collecting terms from \eqref{eq4}, \eqref{eq5} and \eqref{eq6} gives
\begin{align*} 
\E \left[ \exp \( t \Delta_{k+1} \) \middle| \mathcal{F}_k \right] \le \exp \( \( 1 - \frac{\mu \eta_k}{2} \) t \Delta_k + t L \eta_k^2 \), \quad \forall k \in \mathbb{N}, 
\end{align*}
where $\eta_k \in \left(0, \frac1L\right]$ and $t \eta_k \in \left(0, \frac14\right]$. 

\end{proof} 

Next in Lemma \ref{lemma4}, we present a version of Lemma \ref{lemma3} with a specific choice of $ t $. 

\begin{lemma} \label{lemma4}
    Let Assumptions \ref{assump:f} and \ref{assump:subGaussian} hold. If the stepsize sequence $\{ \eta_k \}_k$ and the nonnegative constant sequence $\{t_k\}_k$ {\if\highlight1\color{red}\fi satisfy $\eta_k \in \left(0, \frac1L\right]$, $t_{k + 1}  \eta_k \in \left(0, \frac14\right]$ 
    and $\(1-\frac{\mu\eta_k}{2}\)t_{k+1} \leq t_k$ for all $k\in\mathbb{N}$, then the following holds, } 
    \begin{align*}
        \E \left[ \exp \( t_{k + 1} \Delta_{k + 1} \) \middle| \mathcal{F}_k \right] \le \exp \( t_k \Delta_k + t_{k + 1} L \eta_k^2 \), \quad \forall k \in \mathbb{N}. 
    \end{align*}
\end{lemma}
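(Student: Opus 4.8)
The plan is to apply Lemma~\ref{lemma3} with the specific choice $t = t_{k+1}$ and then absorb the resulting contraction factor into the sequence conditions. The hypotheses of Lemma~\ref{lemma4} require $\eta_k \in \left(0, \frac1L\right]$ and $t_{k+1}\eta_k \in \left(0, \frac14\right]$ for every $k$, which are precisely the conditions needed to invoke Lemma~\ref{lemma3} at step $k$ with the constant $t = t_{k+1}$. Doing so immediately yields
\begin{align*}
\E\left[\exp\left(t_{k+1}\Delta_{k+1}\right)\middle|\mathcal{F}_k\right] \le \exp\left(\left(1 - \frac{\mu\eta_k}{2}\right)t_{k+1}\Delta_k + t_{k+1}L\eta_k^2\right).
\end{align*}

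Next I would rewrite the coefficient multiplying $\Delta_k$ in the exponent. The remaining hypothesis $2(t_{k+1} - t_k) \le \mu t_{k+1}\eta_k$ rearranges directly into $\left(1 - \frac{\mu\eta_k}{2}\right)t_{k+1} \le t_k$. The crucial observation is that $\Delta_k = f(x_k) - f^* \ge 0$, since $f^*$ is the minimum of $f$ by Assumption~\ref{assump:f}. Multiplying the coefficient inequality by the nonnegative quantity $\Delta_k$ preserves its direction, giving $\left(1 - \frac{\mu\eta_k}{2}\right)t_{k+1}\Delta_k \le t_k\Delta_k$. Substituting this into the exponent and using monotonicity of the exponential then produces the claimed bound
\begin{align*}
\E\left[\exp\left(t_{k+1}\Delta_{k+1}\right)\middle|\mathcal{F}_k\right] \le \exp\left(t_k\Delta_k + t_{k+1}L\eta_k^2\right).
\end{align*}

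I expect no real obstacle here: this lemma is essentially a bookkeeping reformulation of Lemma~\ref{lemma3}, tuned so that the moment-generating-function recursion will telescope cleanly across $k$ in the subsequent analysis (the right-hand side now carries $t_k\Delta_k$ rather than a scaled $\Delta_k$, which is what permits chaining the conditional bounds by the tower property). The single point requiring care is the sign of $\Delta_k$, which must be nonnegative for the coefficient substitution to be valid; this is guaranteed by the minimality of $f^*$. It is also worth noting that $\eta_k \le \frac1L$ together with $\mu \le L$ forces $1 - \frac{\mu\eta_k}{2} \ge \frac12 > 0$, so the contraction factor is a genuine number in $(0,1)$, which is ultimately what makes the telescoped bound converge.
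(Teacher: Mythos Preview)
Your argument is correct. It is also a somewhat different (and shorter) route than the paper's. The paper does not invoke the \emph{statement} of Lemma~\ref{lemma3}; instead it reopens the proof of Lemma~\ref{lemma3}, keeps the intermediate bound that still carries both $\Delta_k$ and $\|\nabla f(x_k)\|^2$ in the exponent, and then shows directly that
\[
(t_{k+1}-t_k)\Delta_k + \Bigl(t_{k+1}^2\eta_k^2 - \tfrac{t_{k+1}\eta_k}{2}\Bigr)\|\nabla f(x_k)\|^2 \le 0,
\]
using the PL inequality $\Delta_k \le \frac{1}{2\mu}\|\nabla f(x_k)\|^2$ together with $t_{k+1}\eta_k\le\frac14$ and $2(t_{k+1}-t_k)\le \mu t_{k+1}\eta_k$. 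Your approach collapses all of this by applying Lemma~\ref{lemma3} as a black box with $t=t_{k+1}$, which already absorbs the gradient term and leaves only the scalar inequality $(1-\tfrac{\mu\eta_k}{2})t_{k+1}\le t_k$ to check. That rearrangement is exactly the hypothesis $2(t_{k+1}-t_k)\le \mu t_{k+1}\eta_k$, and the substitution into the exponent is legitimate because $\Delta_k\ge 0$. Your route is cleaner and uses only $\Delta_k\ge 0$ rather than another appeal to strong convexity; the paper's route, on the other hand, makes the cancellation more transparent at the level of $\|\nabla f(x_k)\|^2$, which is closer to how Lemma~\ref{lemma3} was itself derived.
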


\begin{proof}
    {\if\highlight1\color{red}\fi
    Note that for any $k\in\mathbb{N}$, $\eta_k$ and $t_{k+1}$ satisfy the conditions of $\eta_k$ and $t$ in Lemma \ref{lemma3}. Since $\(1-\frac{\mu\eta_k}{2}\)t_{k+1} \leq t_k$, by Lemma \ref{lemma3} we obtain that for each $k\in\mathbb{N}$,  
    \begin{align*}
        & \; \E \[ \exp\( t_{k+1}\Delta_{k+1} \) \middle| \mathcal{F}_k \] \\
        \leq & \; \exp\( t_k\Delta_k \) \cdot \exp \left\{ \( \(1-\frac{\mu\eta_k}{2}\)t_{k+1}-t_k \)\Delta_k + t_{k+1}L\eta_k^2 \right\} \\
        \leq & \; \exp\( t_k\Delta_k + t_{k+1}L\eta_k^2 \), 
    \end{align*}
    which concludes the proof. 
    }
\end{proof}


With the above two lemmas in place, we next prove an $\mathcal{O} \( \frac{\log (1/\beta)}{k} \)$ last-iterate convergence rate, 
which improves the state-of-the-art result \citep{liu2024revisiting} by a log-factor in $k$. 

\begin{lemma} \label{lemma5}
    Let Assumptions \ref{assump:f} and \ref{assump:subGaussian} hold. 
    For $k \in \mathbb{N}$, we set 
    \begin{align*}
        \eta_k = \frac{4}{\mu (k + B)}, \quad t_{k} = \frac{\mu (k + B - 1)}{16}, 
    \end{align*}
    where $B := \max \left\{ \frac{4 L}{\mu}, 3 \right\}$. Then it holds that for any $k \in \mathbb{N}, $ and any $  \beta \in (0, 1)$, 
    \begin{align*}
        \Pr \( f(x_k) - f^* \le \frac{16 \mu \log (1 / \beta) + \mu^2 (B - 1) \Delta_0 + 16 L}{\mu^2 (k + B - 1)} \) \ge 1 - \beta . 
    \end{align*}
\end{lemma}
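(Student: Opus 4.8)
The plan is to control the moment generating function $\E[\exp(t_k\Delta_k)]$ at each fixed target index $k$ and then convert it into a high-probability statement via Markov's inequality. The subtlety — and the reason the naive route loses a $\log k$ factor — is that one cannot simply iterate Lemma \ref{lemma4} along the single linear sequence $t_{k+1}=\frac{\mu(k+B)}{16}$: doing so accumulates a noise contribution $\sum_{j=0}^{k-1}t_{j+1}L\eta_j^2=\frac{L}{\mu}\sum_{j=0}^{k-1}\frac{1}{j+B}=\Theta(\log k)$ in the exponent, which after dividing by $t_k\asymp k$ reproduces exactly the $\frac{\log k}{k}$ rate of \cite{liu2024revisiting}. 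Moreover, a fixed large exponent $t=\frac{\mu(k+B-1)}{16}$ cannot be used from the start either, since it violates the constraint $t\eta_j\le\frac14$ of Lemma \ref{lemma3} for small $j$.

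To resolve both issues simultaneously, for each fixed target $k$ I would introduce a \emph{target-dependent} parameter schedule $\{t_j^{(k)}\}_{j=0}^{k}$ anchored at $t_k^{(k)}=\frac{\mu(k+B-1)}{16}$ and defined so that the contraction condition of Lemma \ref{lemma4} holds with equality, i.e.\ $2(t_{j+1}^{(k)}-t_j^{(k)})=\mu t_{j+1}^{(k)}\eta_j$. Solving this recursion backward from the anchor gives the closed form $t_j^{(k)}=\frac{\mu(j+B-1)(j+B-2)}{16(k+B-2)}$, which is quadratic (not linear) in $j$ and decays geometrically as $j$ decreases. I would first verify that this schedule meets all hypotheses of Lemma \ref{lemma4}: the step-size bound $\eta_j\le\frac1L$ coming from $B\ge\frac{4L}{\mu}$, the requirement $t_{j+1}^{(k)}\eta_j=\frac{j+B-1}{4(k+B-2)}\le\frac14$ (which holds precisely because the schedule shrinks the early parameters), and the equality above.

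Next I would telescope Lemma \ref{lemma4} along this schedule. Taking total expectations and multiplying the per-step bounds yields $\E[\exp(t_k^{(k)}\Delta_k)]\le\exp\big(t_0^{(k)}\Delta_0+\sum_{j=0}^{k-1}t_{j+1}^{(k)}L\eta_j^2\big)$, where $\Delta_0=f(x_0)-f^*$ is deterministic. The crucial computation is the noise sum: with the quadratic schedule $t_{j+1}^{(k)}L\eta_j^2=\frac{L(j+B-1)}{\mu(k+B-2)(j+B)}$, so $\sum_{j=0}^{k-1}t_{j+1}^{(k)}L\eta_j^2=\frac{L}{\mu(k+B-2)}\sum_{j=0}^{k-1}\big(1-\frac{1}{j+B}\big)\le\frac{Lk}{\mu(k+B-2)}\le\frac{L}{\mu}$. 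This is the heart of the argument: the geometric decay of $t_j^{(k)}$ in $j$ turns the harmonic (logarithmic) sum produced by the naive schedule into the bounded constant $L/\mu$, which is exactly what removes the $\log k$.

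Finally I would apply Markov's inequality, $\Pr(\Delta_k\ge a)\le\exp(-t_k^{(k)}a)\,\E[\exp(t_k^{(k)}\Delta_k)]$, set the right-hand side equal to $\beta$, and solve for $a$. Using $t_k^{(k)}=\frac{\mu(k+B-1)}{16}$ together with $t_0^{(k)}\Delta_0=\frac{\mu(B-1)(B-2)}{16(k+B-2)}\Delta_0$ and the noise bound $L/\mu$, the threshold becomes $a=\frac{16}{\mu(k+B-1)}\big(t_0^{(k)}\Delta_0+\frac{L}{\mu}+\log\frac1\beta\big)$; bounding the $\Delta_0$-coefficient $\frac{(B-1)(B-2)}{(k+B-1)(k+B-2)}$ by $\frac{B-1}{k+B-1}$ yields precisely the stated rate, and the case $k=0$ holds trivially since the right-hand side already exceeds $\Delta_0$. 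I expect the only genuine obstacle to be identifying the parameter schedule that simultaneously keeps $t_{j+1}^{(k)}\eta_j\le\frac14$ and collapses the noise sum to a constant; once the quadratic schedule is in hand, the remaining steps are routine calculations.
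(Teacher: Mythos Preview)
Your proposal is correct and is essentially the paper's own argument in a different packaging: the paper iterates Lemma~\ref{lemma3} with the single value $t_{k+1}=\frac{\mu(k+B)}{16}$ and tracks the contraction factors $\prod_{i=j+1}^k(1-\frac{\mu\eta_i}{2})=\frac{(j+B-1)(j+B)}{(k+B-1)(k+B)}$, which produces exactly your ``quadratic'' effective parameters $t_{j+1}^{(k)}$ (up to the index shift $k\leftrightarrow k+1$); your use of Lemma~\ref{lemma4} with the backward recursion set to equality is the same computation, since $2(t_{j+1}-t_j)=\mu t_{j+1}\eta_j$ is equivalent to $t_j=(1-\frac{\mu\eta_j}{2})t_{j+1}$. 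The verification that $t_{j+1}^{(k)}\eta_j=\frac{j+B-1}{4(k+B-2)}\le\frac14$, the collapse of the noise sum to $L/\mu$, and the Markov step all match the paper line by line.
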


\begin{proof}
    {\if\highlight1\color{red}\fi
    To apply Lemma \ref{lemma3} iteratively, we first verify that the chosen parameters satisfy the needed conditions. By the definition of $\eta_k$ and $t_{k+1}$, we have 
    \begin{align*}
        0 < \eta_k \leq \frac{4}{\mu B} \leq \frac{1}{L}, \quad \forall k \in \mathbb{N}. 
    \end{align*}
    And notice that 
        $ \prod_{i=0}^k \( 1 - \frac{\mu \eta_i}{2} \) = \prod_{i=0}^k \( 1 - \frac{2}{i + B} \) 
        = \prod_{i=0}^k \frac{i + B - 2}{i + B} = \frac{(B - 2) (B - 1)}{(k + B - 1) (k + B)}.  $
    Therefore, for any $0 \le j \le k$, 
    \begin{align*}
        \prod_{i=j+1}^k \( 1 - \frac{\mu \eta_i}{2} \) = \frac{\prod_{i=0}^k \( 1 - \frac{\mu \eta_i}{2} \)}{\prod_{i=0}^j \( 1 - \frac{\mu \eta_i}{2} \)} = \frac{(j + B - 1) (j + B)}{(k + B - 1) (k + B)},  
    \end{align*}
    and consequently, 
    \begin{align*}
        t_{k+1}  \eta_j \prod_{i=j+1}^k \(1-\frac{\mu \eta_i}{2}\) & = \frac{(k + B)   (j + B - 1) (j + B)}{4 (j + B) (k + B - 1) (k + B)} \le \frac14, 
    \end{align*}
    which shows that $\eta_k \in \(0, \frac1L\]$ and $t_{k+1} \eta_j \prod_{i=j+1}^k (1-\frac{\mu \eta_i}{2}) \in \(0, \frac14\]$, $\forall k \in \mathbb{N}$, $0 \leq j \leq k$. Therefore, we can apply Lemma \ref{lemma3} iteratively to get that}, for any $k \in \mathbb{N}$,
    \begin{align*}
        &\; \E \[ \exp\( t_{k+1}   \Delta_{k+1} \) \] \\
        \le&\; \E \[ \exp\( \(1 - \frac{\mu \eta_k}{2}\) t_{k+1}   \Delta_k + t_{k+1}   L \eta_k^2 \) \] \\
        \le&\; \E \[ \exp\( \(1 - \frac{\mu \eta_{k-1}}{2}\) \(1 - \frac{\mu \eta_k}{2}\) t_{k+1}   \Delta_{k-1} + \(1 - \frac{\mu \eta_k}{2}\) t_{k+1}   L \eta_{k-1}^2 + t_{k+1}   L \eta_k^2 \) \] \\
        \le&\; \cdots \\
        \le&\; \E \[ \exp\( t_{k+1}   \prod_{i=0}^k \( 1 - \frac{\mu \eta_i}{2} \) \Delta_0 + t_{k+1}  L \sum_{j=0}^k \eta_j^2 \prod_{i=j+1}^k \( 1 - \frac{\mu \eta_i}{2} \) \) \],  
    \end{align*}
    where 
    \begin{align*}
        t_{k+1} \prod_{i=0}^k \( 1 - \frac{\mu \eta_i}{2} \) \Delta_0  = \frac{\mu (B - 2) (B - 1) (k + B)  }{16 (k + B - 1) (k + B)} \Delta_0 \le \frac{\mu (B - 1)}{16} \Delta_0, 
    \end{align*}
    and
    \begin{align*}
        t_{k+1} L \sum_{j=0}^k \eta_j^2 \prod_{i=j+1}^k \( 1 - \frac{\mu \eta_i}{2} \) & = \frac{L (k+B)  }{\mu} \sum_{j=0}^k \frac{(j + B - 1) (j + B)}{(j + B)^2 (k + B - 1) (k + B)} \\
        & \le \frac{L (k + 1) }{\mu (k + B - 1)} \le \frac{L}{\mu}, 
    \end{align*}
    which implies 
    \begin{align*}
        \E \[ \exp\( t_{k+1} \Delta_{k+1} \) \] \le \exp \( \frac{\mu (B - 1)}{16} \Delta_0 + \frac{L}{\mu} \), \quad \forall k \in \mathbb{N}.   
    \end{align*}
    For $k=0$, we directly have 
    \begin{align*}
        \E \[ \exp \( t_0 \Delta_0 \) \] = \exp \( \frac{\mu (B - 1)}{16} \Delta_0 \) \le \exp \( \frac{\mu (B - 1)}{16} \Delta_0 + \frac{L}{\mu} \). 
    \end{align*}
    Thus, combining both cases, we obtain 
    {\if\highlight1\color{red}\fi
    \begin{align}
        \E \[ \exp\( t_k \Delta_k \) \] \le \exp \( \frac{\mu (B - 1)}{16} \Delta_0 + \frac{L}{\mu} \), \quad \forall k \in \mathbb{N}. 
        \label{eq:mgf-tdelta}
    \end{align}
    }

    By Markov's inequality, for any $v > 0$ and $k \in \mathbb{N}$, 
    \begin{align*}
        \Pr \( t_k \Delta_k {\if\highlight1\color{red}\fi >} \log \frac{1}{v} \) = \Pr \( \exp \( t_k \Delta_k \) {\if\highlight1\color{red}\fi >} \frac{1}{v} \) \leq  v \E \[ \exp \( t_k \Delta_k \) \] \le v \cdot \exp \( \frac{\mu (B - 1)}{16} \Delta_0 + \frac{L}{\mu} \). 
    \end{align*}
    We pick $v$ so that $\beta := v \cdot \exp \( \frac{\mu (B - 1)}{16} \Delta_0 + \frac{L}{\mu}\) \in (0, 1)$ to get 
    \begin{align*}
       \Pr \( f(x_k) - f^* \le \frac{16 \mu \log (1 / \beta) + \mu^2 (B - 1) \Delta_0 + 16 L}{\mu^2 (k + B - 1)} \) \ge 1 - \beta, 
    \end{align*}
    for any $k \in \mathbb{N}$ and $ \beta \in (0, 1)$. 
\end{proof}

We are now prepared to establish the main upper bound, presented in Theorem \ref{thm:upper}.

\begin{theorem}
    \label{thm:upper}
    Consider the classic stochastic gradient descent algorithm {\if\highlight1\color{red}\fi\eqref{eq:SGD}}, 
    where $ g (x_k ; \xi_k) $ is the stochastic gradient that satisfies Assumption \ref{assump:subGaussian}. 
    Let the objective $f$ be {\if\highlight1\color{red}\fi$\mu$-strongly convex} $(\mu > 0)$ and $L$-smooth $(0 < L < \infty)$. 
    For $k \in \mathbb{N}$, we set 
    \begin{align*}
        \eta_k = \frac{4}{\mu (k + B)}, 
        {\if\highlight1\color{red}\fi \quad t_{k} = \frac{\mu (k + B - 1)}{16},} 
    \end{align*}
    where $B := \max \left\{ \frac{4 L}{\mu}, 3 \right\}$. Then it holds that, for any $\beta \in \(0, \frac{6}{\pi^2} \)$, 
    \begin{align*} 
        \Pr \( \Delta_k \le \frac{16 \mu \log \( \( \log_2 k + 1 \)^2 / \beta \) + \mu^2 B \Delta_0 + 32 L }{ \mu^2 \( k + B - 1 \)}, \;  \forall k \in \mathbb{N}_+ \) \ge 1 - \frac{\pi^2 \beta }{6} , 
    \end{align*}
    where $\Delta_k := f (x_k) - f ^*$, and $ f^* $ is the minimum of $f$ over $\R^d$. 
\end{theorem}

\begin{proof}
{\if\highlight1\color{red}\fi
Since for any $k \in \mathbb{N}$, 
\begin{align*}
    \(1-\frac{\mu\eta_k}{2}\)t_{k+1} = \frac{k + B - 2}{k + B}\cdot \frac{\mu(k+B)}{16} = \frac{\mu(k + B - 2)}{16} \le t_k,  
\end{align*}
we can apply Lemma \ref{lemma4} to get that 
\begin{align*}
   \E \left[ \exp \( t_{k + 1} \Delta_{k + 1} \) \middle|\mathcal{F}_k \right] 
    \le \exp \( t_k \Delta_k + t_{k + 1} L \eta_k^2 \) 
    = \exp \( t_k \Delta_k + \frac{L}{\mu (k + B)} \). 
\end{align*}
}
Using the inequality $ \log(k+B) - \log(k-1+B)  \ge \frac{1}{k + B}$, we obtain
\begin{align*}
    \E \left[ \exp \( t_{k+1} \Delta_{k+1} - \frac{L}{\mu} \log(k + B) \) \middle| \mathcal{F}_k \right] \le \exp \( t_k \Delta_k - \frac{L}{\mu} \log(k - 1 + B) \), 
\end{align*}
thus $\left\{ \exp \left( t_k \Delta_k - \frac{L}{\mu} \log (k + B - 1) \right) \right\}_k$ is a positive supermartingale. We partition the iteration horizon into geometrically expanding intervals of the form $[2^m,2^{m+1})$, and consider 
\begin{align*}
    &\; \Pr \( \max_{ 2^m \le k < 2^{m+1} } t_k \Delta_k  > \log \frac{1}{v}  \) 
    =
    \Pr \( \max_{ 2^m \le k < 2^{m+1} } \exp \( t_k \Delta_k \)  > \frac{1}{v}  \) \\
    =& \;  
    \Pr \left\{ \max_{ 2^m \le k < 2^{m+1} } \exp \( t_k \Delta_k - \frac{L}{\mu} \log (2^{m+1} + B - 1) \) >  \frac{1}{v} \exp \( - \frac{L}{\mu} \log (2^{m+1} + B - 1) \)  \right\} \\
    \le& \; 
    \Pr \left\{ \max_{ 2^m \le k < 2^{m+1} }  \exp \(  t_k \Delta_k  - \frac{L}{\mu} \log (k + B - 1) \) > \frac{1}{v} \exp \( - \frac{L}{\mu} \log (2^{m+1} + B - 1) \)  \right\} =: \text{\textcircled{1}}. 
\end{align*} 
We then apply Ville's inequality to obtain, for any $v > 0$, $m \in \mathbb{N}$,
\begin{align*} 
    \text{\textcircled{1}}\overset{\text{(i)}}{\le}& \; 
    \frac{ v \E \[ \exp \( t_{2^m} \Delta_{2^m} - \frac{L}{\mu} \log (2^m + B - 1) \) \] }{ \exp \( - \frac{L}{\mu} \log (2^{m+1} + B - 1) \)  }  \\ 
    \overset{\text{(ii)}}{\le} & \; 
    2^{\frac{L}{\mu}} v  \E \[ \exp \( t_{2^m} \Delta_{2^m} \) \] \\
    \overset{\text{(iii)}}{\le} & \;
    v \exp \( \frac{\mu (B - 1)}{16} \Delta_0 +  \frac{\(1 + \log 2\)L}{\mu} \) \\
    \le & \; v \exp \( \frac{\mu B }{16} \Delta_0 +  \frac{2 L}{\mu} \) , 
\end{align*}
where (i) uses Ville's inequality, (ii) uses 
\begin{align*}
    \frac{\exp \( - \frac{L}{\mu} \log (2^m + B - 1) \)}{\exp \(- \frac{L}{\mu} \log (2^{m+1} + B - 1) \) } 
    = \( \frac{2^{m+1} + B - 1}{2^m + B - 1} \)^{\frac{L}{\mu}} < 2^{\frac{L}{\mu}}, 
\end{align*}
and (iii) uses {\if\highlight1\color{red}\fi\eqref{eq:mgf-tdelta}. }

We pick $v$ so that $\beta := v \exp \( \frac{\mu B }{16} \Delta_0 + \frac{2L}{\mu}  \) \in (0, 1)$ to get: For all $ \beta \in (0, 1) $ and all $ m \in \mathbb{N} $, 
\begin{align} 
\begin{aligned}
    &\; \Pr \( \Delta_k \le \frac{16 \mu \log (1/\beta) + \mu^2 B \Delta_0 + 32 L }{ \mu^2 \( k + B - 1 \)}, \;  2^m \le k < 2^{m + 1} \) \ge 1 - \beta. 
\end{aligned}
\label{eq:2^m}
\end{align}


Let 
\begin{align*}
    \mathcal{E}_m (\beta) 
    = 
    \left\{ \Delta_k \le \frac{16 \mu \log (1/\beta) + \mu^2 B \Delta_0 + 32 L }{ \mu^2 \( k + B - 1 \)}, \; \forall k, \;  2^m \le k < 2^{m + 1} \right\}. 
\end{align*}
For all $\beta \in \left( 0, \frac{6}{\pi^2} \right)$ and $m \in \mathbb{N}$, it holds that $\frac{\beta}{(m+1)^2} \le \beta < 1$, so by \eqref{eq:2^m} we have 
    $\Pr \( \mathcal{E}_m \( \frac{\beta}{(m+1)^2} \) \) \ge 1 - \frac{\beta}{(m+1)^2}. $
Note that $m  \le \log_2 k$ when $2^m \le k < 2^{m+1}$, by taking a union bound, we conclude that 
\begin{align*} 
    &\; \Pr \( \Delta_k \le \frac{16 \mu \log \( \( \log_2 k+1 \)^2 / \beta \) + \mu^2 B \Delta_0 + 32 L }{ \mu^2 \( k + B - 1 \)}, \quad  \forall k \in \mathbb{N}_+ \) \\ 
    \ge&\; 
    \Pr \( \bigcap_{m \in \mathbb{N} } \mathcal{E}_m \( \frac{\beta}{(m+1)^2} \) \) \\
    \ge & \;
    1 - \sum_{m=0}^{\infty} \frac{\beta}{(m+1)^2} = 1 - \frac{\pi^2 \beta }{6}, \quad \forall \beta \in \(0, \frac{6}{\pi^2}\) . 
\end{align*}

\end{proof}  

\section{The Uniform-in-Time Lower Bound}

In this section, we present a lower bound for the uniform-in-time convergence of stochastic gradient methods on strongly convex objectives. To begin with, we give a precise specification of the problem to be studied. We consider the stochastic first-order problem $\min_{x\in\R^d}f(x)$, in an environment where $f$ is accessible only through a stochastic gradient oracle $\Phi$. 
A stochastic first-order optimizer $\mathcal{M}$ is any procedure that solves these problems by iteratively selecting values from $\R^d$ and calling the oracle $\Phi$. 
More specifically, in the $k$-th iteration, the optimizer $\mathcal{M}$ calls the oracle at $x_k$, and the oracle reveals a stochastic version $g(x_k,\xi_k)$ of the gradient $\nabla f(x_k)$, where $\{\xi_k\}$ is an i.i.d. sequence of random variables following distribution $\Pr_{\Phi}$. The optimizer then uses the information $\{g(x_1,\xi_1),\cdots, g(x_k,\xi_k)\}$ to decide at which point $x_{k+1}$ the next oracle call should be made. A stochastic first-order oracle $\Phi$ is fully characterized by its gradient function $g(\cdot,\cdot)$ and the  noise distribution $\Pr_\Phi$. Throughout our analysis, we denote a specific problem instance by the pair $(f,\Phi)$, where $f$ is the objective function and $\Phi$ is the stochastic first-order oracle. Our focus in this section is on strongly convex problems, which are formalized as follows.
\begin{definition}
    \label{def}
    Given $ \mu, \sigma >0 $, the class $\Psi_{sc}(\mu, \sigma)$ consists of all problems $(f,\Phi)$ such that:
    \begin{enumerate}
        \item $f$ is $\mu$-strongly convex,
        \item For any $x$ and $k$, the stochastic gradient $g(x,\xi_k)$ satisfies $\E_{\Phi}[g(x,\xi_k)]=\nabla f(x)$ and 
        \begin{align*}
        \E_{\Phi}\left[\exp\left(\frac{\|g(x,\xi_k)-\nabla f(x)\|^2}{\sigma^2}\right)\right]\leq\exp(1).
        \end{align*}
    \end{enumerate}
\end{definition}

We would like to derive the lower bound for the following uniform-in-time convergence rate.
\begin{definition}
    Let $\Psi_{sc} (\mu, \sigma)$ be the set of stochastic first-order problems specified in Definition \ref{def} for some constants $\mu, \sigma > 0$. A stochastic gradient method $\mathcal{M}$ has uniform-in-time convergence rate $\varepsilon(\cdot):\;\mathbb{N}_+\to\R_+$ on $\Psi_{sc} (\mu, \sigma)$ with confidence $1-\alpha$ if for any $(f,\Phi)\in \Psi_{sc}(\mu, \sigma) $, 
    it holds that
    \begin{align*}
        \Pr_\Phi\left(f(x_k )-f^*\leq\varepsilon(k ), \;\; \forall k \in \mathbb{N}_+ \right)\geq1-\alpha.
    \end{align*}
\end{definition}

The following theorem provides a lower bound of the uniform-in-time convergence rate for stochastic gradient methods on strongly convex problems.

\begin{theorem}\label{thm:lower}
    Let $\alpha\in(0, 1)$, $\mu>0$ and $\sigma>0$ be constants. If a stochastic gradient method $\mathcal{M}$ has uniform-in-time convergence rate $\varepsilon(\cdot)$ on $\Psi_{sc}(\mu, \sigma)$ with confidence $1-\alpha$, then for infinitely many $n\in\mathbb{N}_+$, it holds that
        $\varepsilon(n)\geq\frac{\sigma^2(1-\alpha)}{48\mu}\cdot\frac{\log\log n}{n}.$
\end{theorem}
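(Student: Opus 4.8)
The plan is to reduce the uniform-in-time optimization lower bound to a width lower bound for a confidence sequence estimating a sub-Gaussian mean, and then to prove that width bound by a multi-scale sequential testing argument whose difficulty is exactly the law of the iterated logarithm. First I would work in dimension $d=1$ and restrict to the quadratic family $f_\theta(x)=\frac{\mu}{2}(x-\theta)^2$, with an oracle returning $g(x,\xi_k)=\mu(x-\theta)+\xi_k$ for i.i.d. noise $\xi_k$ normalized so that $(f_\theta,\Phi)\in\Psi_{sc}(\mu,\sigma)$; concretely $\xi_k\sim N(0,s^2)$ with $s^2=\frac{\sigma^2}{2}(1-e^{-2})$ satisfies $\E[\exp(\xi_k^2/\sigma^2)]\le e$, so $s^2\asymp\sigma^2$. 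Since $x_k$ is chosen before the $k$-th oracle call, from $(x_k,g(x_k,\xi_k))$ the analyst recovers $Y_k:=x_k-g(x_k,\xi_k)/\mu=\theta-\xi_k/\mu$, an i.i.d. unbiased estimate of $\theta$ with per-sample variance $v:=s^2/\mu^2\asymp\sigma^2/\mu^2$.

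Strong convexity then converts the convergence guarantee into an estimation guarantee: $f(x_k)-f^*=\frac{\mu}{2}(x_k-\theta)^2\le\varepsilon(k)$ is equivalent to $|x_k-\theta|\le w_k$ with $w_k:=\sqrt{2\varepsilon(k)/\mu}$. Thus the hypothesis of the theorem states exactly that $\{[x_k-w_k,\,x_k+w_k]\}_k$ is a confidence sequence for the mean $\theta$ with coverage $1-\alpha$, valid uniformly over all $\theta\in\R$. It therefore suffices to prove that any such confidence sequence obeys $w_n\ge c\sqrt{v\log\log n/n}$ for infinitely many $n$, since then $\varepsilon(n)=\frac{\mu}{2}w_n^2\ge\frac{\mu}{2}c^2 v\frac{\log\log n}{n}$ infinitely often.

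The core is a change of measure between the null $\theta=0$ and a family of alternatives placed at a sparse, fast-growing sequence of times $n_1<n_2<\cdots$ with separations $\delta_m$ slightly exceeding $2w_{n_m}$. Validity under $\theta=0$ makes the event $G=\{\forall k:\ |x_k|\le w_k\}$ have probability at least $1-\alpha$, and on $G$ one has $|x_{n_m}-\delta_m|>w_{n_m}$ for every $m$, i.e. the sequence misses each $\delta_m$; validity under $\theta=\delta_m$ forces it to catch $\delta_m$ at time $n_m$ with probability at least $1-\alpha$. Comparing these through the likelihood ratio $L_n(\delta)=\exp(\frac{\delta}{v}S_n-\frac{n\delta^2}{2v})$, with $S_n=\sum_{k=1}^{n}Y_k$ under the null, bounds the typical size of $L_{n_m}$ on $G$ and hence pins down the fluctuations of the partial sums $S_{n_m}$. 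A single scale only yields the two-point bound $\delta_m\gtrsim\sqrt{v/n_m}$; the point is that keeping the sequence simultaneously valid across the $\asymp\log n$ dyadic scales below time $n$ while covering every alternative is precisely the iterated-logarithm barrier, which upgrades the separation to $\delta_m\gtrsim\sqrt{v\log\log n_m/n_m}$. Choosing the times $n_m$ so that the per-scale increment blocks are (nearly) independent lets a second-Borel--Cantelli argument turn this into the desired ``for infinitely many $n$'' statement.

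Unwinding the reduction produces the stated constant: substituting $w_n^2=2\varepsilon(n)/\mu$ and $v\asymp\sigma^2/\mu^2$ into $\varepsilon(n)\ge\frac{\mu}{2}c^2 v\frac{\log\log n}{n}$ collapses the prefactor to $\frac{\sigma^2(1-\alpha)}{48\mu}$ once all numerical constants are tracked---the factor $2$ from strong convexity, the factor from $\delta_m\approx 2w_{n_m}$, the iterated-logarithm constant, the sub-Gaussian normalization $s^2=\frac{\sigma^2}{2}(1-e^{-2})$, and a coverage-budget factor that enters linearly as $1-\alpha$ (so $c^2\propto 1-\alpha$). I expect the main obstacle to be this core testing step: engineering the multi-scale change of measure so that the $\log\log n$ factor genuinely emerges rather than the weaker $\sqrt{1/n}$ of any isolated two-point comparison, while keeping every alternative inside $\Psi_{sc}(\mu,\sigma)$ and steering the constants to $1/48$. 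The most delicate part is verifying the (near-)independence of the per-scale failure events required for the second-Borel--Cantelli step that yields the infinitely-often conclusion instead of merely an eventual one.
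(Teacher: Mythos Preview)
Your reduction is exactly the paper's: restrict to one-dimensional quadratics $f_\theta(x)=\frac{\mu}{2}(x-\theta)^2$ with Gaussian noise so that the gradient oracle can be inverted to produce i.i.d.\ unbiased samples of $\theta$, and then read the uniform-in-time optimization guarantee as a confidence sequence/test sequence for the mean. Where you diverge is in how the iterated-logarithm barrier is established. The paper does not prove it from scratch: it encodes the alternatives on a ternary Cantor tree $\theta(v)=2\sum_{i\ge1}v_i3^{-i}$, shows that a uniform-in-time optimizer with rate $\varepsilon(\cdot)$ yields an $(\alpha,\{n_k\})$-test with $n_k=\inf\{n:\varepsilon(n)<\tfrac{\mu}{8}9^{-k}\}$, and then simply invokes Duchi and Haque's Theorem~3, which says any such test must have $n_k>(1-\alpha)\Delta_k^{-1}\log k$ for infinitely many $k$. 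Substituting the KL bound $\Delta_k\le\frac{9^{-k}\mu^2}{2\sigma^2}$ and a change of variable $N_k=(1-\alpha)\frac{9^k\sigma^2}{2\mu^2}\log k$ gives the stated constant $\tfrac{1}{48}$ in two lines.

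Your plan instead tries to reprove that testing lower bound directly via a multi-scale likelihood-ratio plus second Borel--Cantelli argument. That route is viable in principle (it is essentially the Farrell/Robbins approach underlying Duchi--Haque), but your sketch does not actually isolate the mechanism by which $\log\log n$ appears: the sentence ``keeping the sequence simultaneously valid across the $\asymp\log n$ dyadic scales\ldots is precisely the iterated-logarithm barrier'' is an assertion, not an argument, and the near-independence of the per-scale failure events that Borel--Cantelli needs is exactly the delicate point you flag but do not resolve. Relative to the paper's route, you are trading a one-line citation for a genuinely nontrivial lemma you still have to prove. Finally, your claim that ``all numerical constants\ldots collapse to $\frac{\sigma^2(1-\alpha)}{48\mu}$'' is not credible: your own (careful) normalization $s^2=\frac{\sigma^2}{2}(1-e^{-2})$ already introduces a factor $\approx0.432$ absent from the paper's bookkeeping (the paper tacitly takes the Gaussian variance equal to $\sigma^2$), so if you track constants honestly you will not recover $1/48$ on the nose.
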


To prove this theorem, we will show that if we possess an optimizer with a uniform-in-time convergence guarantee, we can then construct a sequence of tests to progressively discriminate between increasingly closer distributions. We denote $\mathcal{V}=\{0,1\}^\infty$, the set of all $\{0,1\}$-valued sequences with infinite length. For each $v\in\mathcal{V}$, we define
\begin{align*}
    \theta(v)=2\cdot\sum_{i=1}^\infty v_i3^{-i}\quad\text{and}\quad \Pr_v=\mathcal{N}\left(\mu\theta(v),\sigma^2\right).
\end{align*}
This definition yields that if $v_{1:k}=v'_{1:k}$, then the KL-divergence between $\Pr_v$ and $\Pr_{v'}$ satisfies
\begin{align}
\begin{aligned}
    D_{kl}(\Pr_v\|\Pr_{v'})&=\frac{\mu^2\left(\theta(v)-\theta(v')\right)^2}{2\sigma^2} \leq\frac{\mu^2}{2\sigma^2}\left(2\cdot\sum_{i=k+1}^\infty3^{-i}\right)^2\leq\frac{9^{-k}\mu^2}{2\sigma^2}.
\end{aligned}
\label{eq:kl}
\end{align}
We will adopt the following concept of test sequence. 
\begin{definition}[\citet{duchi2024information}]
    {\if\highlight1\color{red}\fi
    Let $\mathcal{X}$ be a set and let $X_1,X_2,\cdots\overset{\text{i.i.d.}}{\sim}\Pr_v$ be $\mathcal{X}$-valued random variables. 
    For $\{n_k\}_{k \in \mathbb{N}}$ that is a sub-sequence of natural numbers,
    consider a sequence of  functions $ \left\{\widetilde{V}_k\right\}_{k\in\mathbb{N}_+} $ such that each $ \widetilde{V}_k : \mathcal{X}^{n_k} \to \{0,1 \} $ takes $ X_{1:n_k} := \( X_1, X_2, \cdots, X_{n_k} \) $ as input. 
    For $\alpha \in (0,1)$ and $\{n_k\}_{k \in \mathbb{N}}$ a subsequence of the natural numbers, 
    a sequence $\left \{\widetilde{V}_k\right\}_{k\in\mathbb{N}_+}$ defined with respect to $\{n_k\}_{k \in \mathbb{N}}$ is called an $(\alpha,\{n_k\})$-test (on $\{\Pr_v\}_{v \in \mathcal{V}}$) if for any $v\in\mathcal{V}$, 
    \begin{align*}
    \Pr_v\left(\widetilde{V}_k(X_{1:n_k})=v_k\text{ for all $k\in\mathbb{N}_+$}\right)\geq1-\alpha, 
    \end{align*}
    where $ v_k $ denote the $k$-th component of the sequence $ v \in  \mathcal{V}$. 
    }
\end{definition}
The following lemma demonstrates that a stochastic gradient method with a uniform-in-time convergence rate can be employed to construct a test sequence to distinguish normal distributions with increasingly closer locations.
\begin{lemma}\label{lem:reduction}
    Suppose $\mathcal{M}$ is a stochastic gradient method that has uniform-in-time convergence rate $\varepsilon(\cdot)$ on $\Psi_{sc}(\mu,\sigma)$ with confidence $1-\alpha$. Then for $n_k=\inf\left\{n:\varepsilon(n)<\frac{\mu}{8}\cdot9^{-k}\right\}$, there exists an $(\alpha,\{n_k\})$-test on $\{\Pr_v\}_{v\in\mathcal{V}}$.
\end{lemma}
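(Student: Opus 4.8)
The plan is to convert the optimizer $\mathcal{M}$ into a decoder for the hidden sequence $v$: I run $\mathcal{M}$ on a one‑dimensional quadratic whose minimizer encodes $v$ through the ternary value $\theta(v)$, and I fabricate the oracle answers from the i.i.d. samples $X_1,X_2,\ldots$. Concretely, fix $v\in\mathcal{V}$ and take the instance $f_v(x)=\tfrac{\mu}{2}(x-\theta(v))^2$, which is $\mu$‑strongly convex with $f_v^*=0$ and minimizer $\theta(v)$. When $\mathcal{M}$ makes its $k$‑th query at $x_k$, I return $g(x_k,\xi_k):=\mu x_k - X_k$. Since $\E_{\Pr_v}[X_k]=\mu\theta(v)$, this is conditionally unbiased, $\E[g(x_k,\xi_k)\mid x_k]=\mu(x_k-\theta(v))=\nabla f_v(x_k)$, and the noise $\mu\theta(v)-X_k$ is centered Gaussian. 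Because $\{X_k\}$ is i.i.d.\ and $x_k$ is determined by $X_1,\ldots,X_{k-1}$, these fabricated answers constitute a genuine realization of the oracle $\Phi_v$, so the law of the iterate sequence under the simulation coincides with the law of $\mathcal{M}$ run on $(f_v,\Phi_v)$; moreover $x_{n_k}$ is a function of $X_{1:n_k}$ alone, as required of a test $\widetilde V_k(X_{1:n_k})$. One must verify $(f_v,\Phi_v)\in\Psi_{sc}(\mu,\sigma)$; the only delicate point is the sub‑Gaussian moment bound of Definition \ref{def}, and I expect this to be handled by a fixed rescaling of the samples absorbed into the constants.

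Next I invoke the hypothesis: on a single event $G_v$ with $\Pr_v(G_v)\ge 1-\alpha$, the bound $f_v(x_n)-f_v^*\le\varepsilon(n)$ holds simultaneously for all $n$, i.e.\ $|x_n-\theta(v)|\le\sqrt{2\varepsilon(n)/\mu}$ for all $n$. Evaluating at $n=n_k$ and using $\varepsilon(n_k)<\tfrac{\mu}{8}\,9^{-k}$ yields $|x_{n_k}-\theta(v)|<\tfrac12\,3^{-k}$ on $G_v$, for every $k$ at once. This is the crux of the reduction: the \emph{time‑uniformity} of the optimization guarantee produces, on one event, accurate localizations at all resolutions $3^{-k}$ simultaneously, which is precisely what the ``for all $k$'' clause in the definition of an $(\alpha,\{n_k\})$‑test demands.

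I then let $\widetilde V_k(X_{1:n_k})$ be the $k$‑th digit of the prefix whose Cantor‑type interval $I_p=[\,2\sum_{i\le k}p_i 3^{-i},\,2\sum_{i\le k}p_i 3^{-i}+3^{-k}\,]$ lies nearest to $x_{n_k}$. The base‑$3$ construction is engineered so that these intervals are pairwise disjoint with gaps of size at least $3^{-k}$, and $\theta(v)\in I_{v_{1:k}}$; since $x_{n_k}$ is within $\tfrac12\,3^{-k}$ of $\theta(v)$, the nearest interval must be $I_{v_{1:k}}$, so the decoder returns $v_k$ correctly. Thus on $G_v$ we have $\widetilde V_k(X_{1:n_k})=v_k$ for all $k$, giving $\Pr_v(\widetilde V_k=v_k\ \forall k)\ge\Pr_v(G_v)\ge 1-\alpha$, which is exactly an $(\alpha,\{n_k\})$‑test. (If some $n_k=\infty$ then $\varepsilon$ is bounded below and the eventual lower bound of Theorem \ref{thm:lower} holds trivially, so one may assume every $n_k$ finite.)

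The conceptual content is the one‑run‑to‑all‑$k$ translation just described, and it is essentially forced by the structure; the genuine technical obstacles are therefore (i) confirming the fabricated oracle is a \emph{bona fide} element of $\Psi_{sc}(\mu,\sigma)$ with the correct sub‑Gaussian constant, and (ii) the measurability/independence argument certifying that the simulated trajectory is distributed as $\mathcal{M}$ on $(f_v,\Phi_v)$ so that the convergence bound applies verbatim; the interval‑separation bookkeeping behind the decoder is then routine.
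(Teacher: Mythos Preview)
Your proposal is correct and follows essentially the same route as the paper: both run $\mathcal{M}$ on $f_v(x)=\tfrac{\mu}{2}(x-\theta(v))^2$ with the fabricated oracle $g(x,X_k)=\mu x-X_k$, invoke the uniform-in-time guarantee to get $|x_{n_k}-\theta(v)|<\tfrac12\,3^{-k}$ on a single $(1-\alpha)$-event, and then decode $v_k$ via the Cantor-set separation. Your nearest-prefix-interval decoder is equivalent to the paper's $P_{\mathcal{V}}(x_{n_k})_k:=\big(\arg\min_{v'\in\mathcal{V}}|x_{n_k}-\theta(v')|\big)_k$, and you are right to flag the sub-Gaussian moment check in Definition~\ref{def} as the one nontrivial verification (the paper simply asserts it); indeed with $N(0,\sigma^2)$ noise one has $\E[\exp(\|\,\cdot\,\|^2/\sigma^2)]=\infty$, so a constant rescaling of the noise variance is needed, which only affects constants downstream.
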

\begin{proof}
    Suppose that $X_1,X_2,\cdots$ is an i.i.d. sequence following distribution $\mathcal{N}\left(\mu\theta(v),\sigma^2\right)$, where the location parameter $\theta(v)$ is unknown. In this proof, we will construct an $(\alpha,\{n_k\})$-test for each entry of $v$ based on optimizer $\mathcal{M}$.

    To begin with, we define a stochastic first-order problem based on the sequence $\{X_k\}_{k\in\mathbb{N}_+}$. For each $k$, we define the output of the gradient oracle as $g(x,X_k)=\mu x-X_k$. The objective function is defined as $f_{\theta(v)}$, where $f_{\theta(v)}(x)=\frac{\mu}{2}(x-\theta(v))^2$ for any $x\in\R$. Since $\E[g(x,X_k)]=\mu x-\mu\theta(v)$ for any $x$, it is easy to verify that $(f_{\theta(v)},\Phi)\in\Psi_{sc}(\mu,\sigma)$.
    
    Applying optimizer $\mathcal{M}$ on the problem $(f_{\theta(v)},\Phi)$ yields a sequence $\{x_k\}$. For each $\theta$, we define 
    \begin{align*}
        P_{\mathcal{V}}(\theta)=\mathop{\arg\min}_{v\in\mathcal{V}}|\theta-\theta(v)|
    \end{align*}
    and let $\widetilde{V}(X_{1:n_k})=P_\mathcal{V}(x_{n_k})_k$.
    Consider the event
    \begin{align*}
        E=\bigcap_{k\in\mathbb{N}_+}\{f_{\theta(v)}(x_k)-f_{\theta(v)}^*\leq\varepsilon(k)\}.
    \end{align*}
    Since $\mathcal{M}$ has uniform-in-time convergence rate $\varepsilon(\cdot)$ on $\Psi_{sc}(\mu,\sigma)$ with confidence $1-\alpha$, 
    we have 
    $\Pr(E)\geq1-\alpha$. On the event $E$, for each $k\in\mathbb{N}_+$, by strong convexity we have
    \begin{align*}
        |x_k-\theta(v)|\leq\sqrt{\frac{2}{\mu}(f_{\theta(v)}(x_k)-f_{\theta(v)}^*)}\leq\sqrt{\frac{2\varepsilon(k)}{\mu}},
    \end{align*}
    and thus
    \begin{align*}
        &\; |\theta(P_\mathcal{V}(x_{n_k}))-\theta(v)|\leq|\theta(P_\mathcal{V}(x_{n_k}))-x_{n_k}|+|x_{n_k}-\theta(v)| \leq 2\sqrt{\frac{2\varepsilon(n_k)}{\mu}}<3^{-k}.
    \end{align*}
    On the other hand, when $v$ and $v'$ satisfies $v_{1:k-1}=v'_{1:k-1}$ and $v_k\neq v'_k$, calculation yields that
    \begin{align*}
        \left|\theta(v)-\theta(v')\right|\geq2\cdot3^{-k}-2\cdot\sum_{i=k+1}^\infty3^{-i}=3^{-k}.
    \end{align*}
    
    Consequently, on event $E$ it satisfies $v_{1:k}=P_\mathcal{V}(x_{n_k})_{1:k}$ for each $k\in\mathbb{N}_+$, and thus $\left\{\widetilde{V}_k\right\}_{k\in\mathbb{N}_+}$ is an $(\alpha,\{n_k\})$-test.
\end{proof}

Now we are ready to give a proof of Theorem \ref{thm:lower}.

\begin{proof}[Proof of Theorem \ref{thm:lower}]
    Lemma \ref{lem:reduction} reduces the optimization problem to a test problem. 
    The proof is then completed by applying results for testing problems, relying on the following result of Duchi and Haque: 
    \begin{lemma}[Theorem 3 in \cite{duchi2024information}]\label{lem:nk}
        Let $\{\Pr_v\}_{v\in\mathcal{V}}$ be a collection of distributions where $D_{kl}(\Pr_v \| \Pr_{v'}) \leq \Delta_k$ holds for any $v, v' \in \mathcal{V}$ such that $v_{1:k} = v'_{1:k}$.
        If there exists an $(\alpha,\{n_k\})$-test, then for infinitely many $k\in\mathbb{N}_+$, $n_k>(1-\alpha)\Delta_k^{-1}\log k$. 
    \end{lemma}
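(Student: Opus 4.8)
The plan is to argue by contradiction and reduce the existence of a good test to an impossible change-of-measure budget. Suppose the conclusion fails, i.e. only finitely many $k$ satisfy $n_k > (1-\alpha)\Delta_k^{-1}\log k$, so that $n_k \le (1-\alpha)\Delta_k^{-1}\log k$ for all $k$ beyond some $k_0$. Fix a large scale $k \ge k_0$ and a reference sequence $v^\star$ (say the all-zeros sequence). The structural fact I would exploit is that an $(\alpha,\{n_k\})$-test must be simultaneously correct on \emph{every} coordinate with probability $\ge 1-\alpha$; in particular, under $\Pr_{v^\star}$ it must output $v^\star_j$ at every level $j$, while for any alternative $v$ it must flip its output on precisely the coordinates where $v$ differs from $v^\star$. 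I would choose the alternatives $v$ to agree with $v^\star$ on the first $k$ coordinates and differ only afterwards, so that the $n$-sample divergence controlling any such comparison is at most $n\Delta_k$ by the hypothesis on $\{\Pr_v\}$ (modulo a harmless index shift between where the branching bit is planted and which sample budget $n_k$ is charged, handled by monotonicity of $\{n_k\}$).

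The engine of the argument is a one-sided change-of-measure inequality obtained from the data-processing inequality for KL divergence applied to the binary partition induced by the test's output. For an event $B$ measurable with respect to the first $n$ samples this yields $\Pr_v(B)\,\log\frac{1}{\Pr_{v^\star}(B)} \le n\,D_{kl}(\Pr_v\|\Pr_{v^\star}) + \log 2 \le n\Delta_k + \log 2$. Taking $B$ to be the event that the test outputs the ``wrong'' bit at the level where $v$ and $v^\star$ first differ, correctness under $v$ forces $\Pr_v(B)\ge 1-\alpha$, so the corresponding false-report probability under the reference, $p := \Pr_{v^\star}(B)$, satisfies $\log\frac{1}{p}\le \frac{n\Delta_k+\log 2}{1-\alpha}$, i.e. $p \ge \exp\!\big(-\frac{n\Delta_k+\log 2}{1-\alpha}\big)$. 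This is the quantitative statement that too few samples (small $n$ relative to $\Delta_k^{-1}$) keep the reference-side error $p$ non-negligible.

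To harvest the extra $\log k$ factor I would apply this not to a single alternative but to a family of $\asymp k$ alternatives that branch off $v^\star$ within a dyadic block of levels of size $\asymp k$, each planting its distinguishing bit at a distinct level beyond the common length-$k$ prefix. Each alternative produces a reference-side false-report event $B$ with $\Pr_{v^\star}(B)\ge \exp\!\big(-\frac{n\Delta_k+\log 2}{1-\alpha}\big)$ for the relevant per-level budget $n$. Simultaneous correctness under $v^\star$ caps the aggregate of these events (no wrong bit may ever be reported, except with total probability $\le \alpha$), so the individual lower bounds on $\Pr_{v^\star}(B)$ cannot all be large at once: forcing $\asymp k$ events each of probability at least $\exp\!\big(-\frac{n\Delta_k+\log 2}{1-\alpha}\big)$ to fit inside a budget of order $\alpha$ requires $\frac{n\Delta_k}{1-\alpha}\gtrsim \log k$, i.e. $n > (1-\alpha)\Delta_k^{-1}\log k$ at the relevant levels, contradicting the standing assumption for all large $k$. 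Since the contradiction recurs at every sufficiently large scale, $n_k > (1-\alpha)\Delta_k^{-1}\log k$ must hold for infinitely many $k$, which is the claim.

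The main obstacle is the aggregation step: the events $B$ across different levels are functions of the same data stream and are not independent, so turning ``no wrong bit is ever reported'' into a clean sum $\sum \Pr_{v^\star}(B)\lesssim \alpha$ requires care. I would handle this by disjointifying — replacing each $B$ by the event that its level is the \emph{first} on which a wrong bit appears, which are genuinely disjoint and whose probabilities sum to at most the overall failure probability $\le \alpha$ — while tracking how the change-of-measure bound degrades under this replacement. Getting the constant in front of $\Delta_k^{-1}\log k$ to be exactly $1-\alpha$, rather than a worse absolute constant, is the delicate point and is where the sharp information inequality of Duchi and Haque \cite{duchi2024information} is needed; the looser two-point Le Cam bound only reproduces the rate $\Delta_k^{-1}$ and loses the decisive $\log k$ factor.
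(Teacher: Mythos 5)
First, a framing note: the paper does not actually prove this lemma --- it is imported verbatim as Theorem~3 of Duchi and Haque \cite{duchi2024information} --- so your attempt can only be judged on its own merits. Your engine is sound: data processing for KL together with $d_{kl}(p\,\|\,q)\ge p\log(1/q)-\log 2$ does give $\Pr_{v^\star}(B)\ge\exp\left(-\frac{n\Delta+\log 2}{1-\alpha}\right)$ whenever $\Pr_v(B)\ge 1-\alpha$, and your disjointification worry in fact dissolves: for the alternative $v^{(j)}$ branching at level $j$, the event ``levels $1,\dots,j-1$ reported as $v^\star$ and level $j$ reported wrongly'' already has $\Pr_{v^{(j)}}$-probability at least $1-\alpha$, because correctness under $v^{(j)}$ at levels below $j$ coincides with correctness under $v^\star$ there, so the change of measure applies to the disjointified events with no degradation. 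The genuine gap is the aggregation at a single dyadic scale, and the ``harmless index shift, handled by monotonicity'' is exactly where it fails --- monotonicity runs the wrong way. In your block $j\in(k,2k]$ every alternative agrees with $v^\star$ through level $k$, so the per-sample divergence you can charge is $\Delta_k$ (not $\Delta_j$), and the sample budget is $n_j\le n_{2k}$; pigeonholing $\asymp k$ disjoint events into total mass $\alpha$ therefore yields only $n_{2k}\ge(1-\alpha)\Delta_k^{-1}(\log k-O(1))$. But the negated conclusion you are trying to contradict grants $n_{2k}\le(1-\alpha)\Delta_{2k}^{-1}\log(2k)$, and since $\Delta_{2k}\le\Delta_k$ and $\log(2k)\ge\log k$, your derived lower bound can \emph{never} exceed this assumed upper bound: no contradiction arises for any genuinely decreasing $\Delta_k$. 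This is not a corner case --- in the very application this paper needs, $\Delta_k\asymp 9^{-k}$, so $\Delta_k^{-1}$ and $\Delta_{2k}^{-1}$ differ by a factor $9^k$ within a block, and your argument would only support an $\varepsilon(n)\gtrsim \frac{\log\log n}{n^2}$-type conclusion rather than the stated $\frac{\log\log n}{n}$ rate.

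The repair is to aggregate across all scales rather than within one. Fix $v^\star$ and, for every level $k$, one alternative $v^{(k)}$ with $v^{(k)}_{1:k-1}=v^\star_{1:k-1}$ and $v^{(k)}_k\ne v^\star_k$; let $B_k$ be the event that levels $1,\dots,k-1$ are reported as $v^\star$ and level $k$ is reported as $v^{(k)}_k$. The $B_k$ are pairwise disjoint (for $j<k$, $B_j$ requires level $j$ wrong while $B_k$ requires it right) and each entails an error under $v^\star$, so $\sum_{k}\Pr_{v^\star}(B_k)\le\alpha<\infty$, while your change of measure gives $\Pr_{v^\star}(B_k)\ge\exp\left(-\frac{\bar n_k\Delta_{k-1}+\log 2}{1-\alpha}\right)$ with $\bar n_k=\max_{i\le k}n_i$ (the definition of an $(\alpha,\{n_k\})$-test does not make $n_k$ monotone, so the maximum is needed). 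Since a convergent positive series must have terms below $1/k$ infinitely often --- otherwise it dominates the harmonic series --- you conclude $\bar n_k\Delta_{k-1}>(1-\alpha)\log k-\log 2$ for infinitely many $k$, which is the quoted bound up to Duchi--Haque's indexing and additive-constant conventions. Note also that the decisive $\log k$ arises from this divergence-of-$\sum 1/k$ step, not from a sharper information inequality: the elementary bound you already invoked carries the constant $1-\alpha$, so your closing remark misplaces where the sharpness lives.
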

    
    The above lemma, together with \eqref{eq:kl}, gives that for infinitely many $k\in\mathbb{N}_+$,
    \begin{align}\label{eq:lower-1}
        \varepsilon\left((1-\alpha)\frac{9^k\sigma^2}{2\mu^2}\log k\right)\geq\frac{\mu}{8}\cdot9^{-k}.
    \end{align}
    We denote $N_k=(1-\alpha)\frac{9^k\sigma^2}{2\mu^2}\log k$. Then from (\ref{eq:lower-1}) we have $9^k=\frac{2\mu^2N_k}{(1-\alpha)\sigma^2\log k}$ and
    \begin{align*}
        \varepsilon(N_k)\geq\frac{\mu}{8}\cdot\frac{(1-\alpha)\sigma^2\log k}{2\mu^2N_k}.
    \end{align*}
    Moreover, there exists a constant $K_0$ such that for any $k\geq K_0$, it holds that $N_k\leq e^{3k}$ and $\log k\geq\log\left(\frac{1}{3}\log N_k\right)\geq\frac{1}{3}\log\log N_k$. Consequently, for $n\in\{N_k\}_{k\geq K_0}$, we have
    \begin{align*}
        \varepsilon(n)\geq\frac{(1-\alpha)\sigma^2}{48\mu}\cdot\frac{\log\log n}{n},
    \end{align*}
    which concludes the proof.
\end{proof}

{\if\highlight1\color{red}\fi
\section{Conclusion}

In this work, we provide a high-probability analysis of uniform-in-time convergence bounds for standard stochastic gradient descent (SGD) applied to strongly convex objectives. 
We established improved upper bounds and provided matching lower bounds for this problem, demonstrating the tightness of a $\frac{\log\log k}{k}$ high-probability bound. Additionally, we improved upon existing last-iterate convergence rates by eliminating a logarithmic factor in the iteration count. While our analysis focuses on strongly convex objectives, the results naturally extend to Polyak--\L ojasiewicz functions. These contributions not only advance the theoretical understanding of SGD but also have practical implications for applications requiring dynamic termination criteria, such as early stopping in machine learning. 
}

\vspace{8pt}

\noindent\textbf{Acknowledgement.} T. Wang is partially supported by National Science Foundation of China (No. 12501704), the Science and Technology Commission of Shanghai Municipality (Shanghai Sailing Program, No. 24YF2702500). 
T. Wang is also partially funded by Shanghai Institute for Mathematics and Interdisciplinary Sciences (SIMIS) under grant numbers SIMIS-ID-2024-CN and SIMISID-2024-WE. T. Wang also acknowledge a grant supported by ByteDance Inc. 

\appendix 

\if\highlight1\color{red}\fi

\section{Uniform-in-Time Analysis for Contractive SA} \label{appendix:a}

The Stochastic Approximation (SA) algorithm solves root-finding or optimization problems in stochastic environments. To find the fixed point of $ \overline{F} $, the SA algorithm generates iterates of the form
\begin{align}
    x_{k+1} = x_k + \alpha_k \big( F(x_k;\xi_k) - x_k \big),
    \label{eq:sa-iter}
\end{align}
where $F(x_k;\xi_k)$ approximates the deterministic map $\overline{F}$ with additive sub-Gaussian noise, and $\alpha_k$ denotes the stepsize. When $\overline{F} : \mathbb{R}^d \to \mathbb{R}^d$ is contractive (i.e., There exists $\gamma \in (0,1)$ such that $ \| \overline{F} (x) - \overline{F} (y) \| \le \gamma \| x - y \| $ holds for all $x,y \in \R^d$), and the stepsize satisfies $\alpha_k \asymp 1/k$, the sequence $\{x_k\}$ converges to the fixed point of $\overline{F}$. Below, we demonstrate that Lemma \ref{lemma3} extends naturally to the convergence analysis of contractive SA, thereby preserving the main conclusions for constractive SA.

\begin{assumption} \label{ass:sa}
    Consider the SA iteration specified in \eqref{eq:sa-iter}. Let $ F(x_k;\xi_k) $ be the approximation for $\overline{F} : \R^d \to \R^d$ such that 
        
    (1) There exists $\gamma \in (0,1)$ such that $ \| \overline{F} (x) - \overline{F} (y) \| \le \gamma \| x - y \| $ holds for all $x,y \in \R^d$; 
        
    (2) $\E \left[ F(x_k; \xi_k) \middle| \mathcal{F}_k \right] = \overline{F} (x_k)$; 
        
    (3) (Uniform conditional 1-sub-Gaussianity, Theorem 2.6 in \cite{wainwright2019high}) 
    \begin{align*}
        \E \left[ \exp \( t \| F (x_k; \xi_k) - \overline{F} (x_k) \|^2 \) \middle| \mathcal{F}_k \right] \le \frac{1}{\sqrt{1 - 2t}}, \quad \forall t \in \( 0, \frac12 \), \forall k \in \mathbb{N}.
    \end{align*}
\end{assumption}
Notice that the first itemized condition is contractiveness, and the second and the third itemized conditions are Assumption \ref{assump:subGaussian}. Let $\Delta_k = \|x_k - x^*\|^2$, where $x^*$ is the fixed point of $\overline{F}$. 

\begin{lemma} \label{lem:A1}
    Let Assumption \ref{ass:sa} holds. If the stepsize consequence $\{\alpha_k\}_k$ and the nonnegative constant $t$ satisfy $\alpha_k \in \( 0, \frac{1}{1-\gamma} \]$, $t\alpha_k\in \(0, \frac{1-\gamma}{4}\]$ and $t\alpha_k^2 \in \(0, \frac{1}{8}\]$ for all $k\in\mathbb{N}$, then the following holds, 
    \begin{align*}
        \E \[ \exp \( t\Delta_{k+1} \) \middle| \mathcal{F}_k \] \leq \exp \( t (1-(1-\gamma)\alpha_k) \Delta_k + 2t\alpha_k^2 \), \quad \forall k \in \mathbb{N}. 
    \end{align*}
\end{lemma}

\begin{proof} 
We notice that 
\begin{align*} 
        & \; \|x_{k+1}-x^*\|^2 \\ 
        = & \; \left\| (1-\alpha_k) (x_k - x^*) + \alpha_k \( F(x_k;\xi_k) - \overline{F}(x^*) \) \right\|^2 \\
        = & \; (1-\alpha_k)^2 \| x_k - x^* \|^2 + 2(1-\alpha_k)\alpha_k \< x_k - x^*, F(x_k; \xi_k) - \overline{F}(x^*) \> + \alpha_k^2 \left\| F(x_k; \xi_k) - \overline{F}(x^*) \right\|^2 \\ 
        = & \; (1-\alpha_k)^2 \| x_k - x^* \|^2 + 2(1-\alpha_k)\alpha_k \< x_k - x^*, \theta_k \> + 2(1-\alpha_k)\alpha_k \< x_k-x^*, \overline{F}(x_k)-\overline{F}(x^*)\>  \\
        & + \alpha_k^2 \|\theta_k\|^2 + \alpha_k^2 \left\| \overline{F}(x_k) - \overline{F}(x^*) \right\|^2 + 2\alpha_k^2 \< \overline{F}(x_k) - \overline{F}(x^*), \theta_k \> \\
        \leq & \; (1-\alpha_k + \alpha_k\gamma)^2 \| x_k - x^* \|^2 + 2(1-\alpha_k)\alpha_k \< \theta_k, x_k-x^* \> + \alpha_k^2\|\theta_k\|^2 + 2\alpha_k^2 \< \theta_k, \overline{F}(x_k) - \overline{F}(x^*) \>, 
\end{align*} 
where $\theta_k = F(x_k; \xi_k) - \overline{F}(x_k)$ and the last inequality uses Cauchy-Schwarz inequality and the contractiveness of $\overline{F}$. 

The conditional MGF of $\Delta_{k+1}$ satisfies 
\begin{align}
    & \; \E \[ \exp \( t \Delta_{k+1} \) \middle| \mathcal{F}_k\] \nonumber \\
    = & \; \exp\( t(1-\alpha_k+\alpha_k\gamma)^2 \Delta_k \) \nonumber \\
    & \cdot \underbrace{\E \[ t \( 2(1-\alpha_k)\alpha_k \< \theta_k, x_k - x^* \>  + 2\alpha_k^2 \< \theta_k, \overline{F}(x_k) - \overline{F}(x^*) \> + \alpha_k^2\|\theta_k\|^2 \) \middle| \mathcal{F}_k \]}_{:=\text{\textcircled{1}}}. 
    \label{eq:mgf-sa}
\end{align}
By 1-sub-Gaussianity, we have 
\begin{align}
    \text{\textcircled{1}} & \overset{\text{(i)}}{\leq} \(\E \[ 4t\alpha_k \( (1-\alpha_k) \< \theta_k, x_k - x^* \> + \alpha_k \< \theta_k, \overline{F}(x_k) - \overline{F}(x^*) \> \) \middle| \mathcal{F}_k \] \cdot  \E \[ 2t\alpha_k^2 \| \theta_k \|^2 \middle| \mathcal{F}_k \] \)^{\frac12} \nonumber \\
    & \overset{\text{(ii)}}{\leq} \( \exp \( 8t^2 \alpha_k^2 \left\| (1-\alpha_k)(x_k-x^*) + \alpha_k \( \overline{F}(x_k) - \overline{F}(x^*) \) \right\|^2 \)  \cdot  \frac{1}{\sqrt{1-4t\alpha_k^2}} \)^{\frac12} \nonumber \\
    & \overset{\text{(iii)}}{\leq} \exp \( 4t^2\alpha_k^2(1-\alpha_k+\alpha_k\gamma)^2 \Delta_k + 2t\alpha_k^2 \),  
    \label{eq:1-sa}
\end{align}
where (i) uses Cauchy-Schwarz inequality, (ii) uses 1-sub-Gaussianity and (iii) uses $t\alpha_k^2 \in \(0, \frac18\]$ and the contractiveness of $\overline{F}$. Note that 
\begin{align}
    & \; \(t+4t^2\alpha_k^2\)(1-\alpha_k+\alpha_k\gamma)^2 \nonumber \\
    \overset{\text{(i)}}{\leq} & \; ( t + (1-\gamma)t\alpha_k ) \(1 - 2(1-\gamma)\alpha_k + (1-\gamma)^2\alpha_k^2\) \nonumber \\
    = & \; t(1-(1-\gamma)\alpha_k) - t(1-\gamma)^2\alpha_k^2(1-(1-\gamma)\alpha_k) \nonumber \\
    \overset{\text{(ii)}}{\leq} & \; t(1-(1-\gamma)\alpha_k), 
    \label{eq:delta-k-scale}
\end{align}
where (i) uses $t\alpha_k \in \( 0, \frac{1-\gamma}{4} \]$, (ii) uses $\alpha_k \in \(0, \frac{1}{1-\gamma}\]$. 

By substituting \eqref{eq:delta-k-scale} and \eqref{eq:1-sa} into \eqref{eq:mgf-sa}, we get the desired result. 
\end{proof} 

Building on Lemma \ref{lem:A1} and following the same proof technique, we can prove an $\mathcal{O}(k^{-1}\log\log k)$ upper bound for the SA algorithm \eqref{eq:sa-iter} under Assumption \ref{ass:sa} by setting the step size $\alpha_k \asymp 1/k$ and the parameter $t_k \asymp k$. 

\color{black}

\bibliographystyle{plainnat} 
\bibliography{references} 



\end{document}